\newtheoremstyle{myprop}
  {3pt}
  {3pt}
  {\itshape}
  {}
  {\scshape}
  {.}
  {.5em}
  {}
\theoremstyle{myprop}
\newtheorem{proposition}{Proposition}
\newtheorem{lemma}[proposition]{Lemma}
\newtheorem{theorem}[proposition]{Theorem}
\theoremstyle{remark}
\newcommand\N{\mathbb{N}}
\newcommand\R{\mathbb{R}}
\newcommand\Z{\mathbb{Z}}
\newcommand\eps{\varepsilon}
\newcommand{\vect}{\bm }  
\newcommand\Prob{\mathbb{P}}    
\newcommand\ind{\mathbb{I}}     
\newcommand\Fc{\mathcal{F}}
\newcommand\Bb{\mathbb{B}}
\newcommand\Db{\mathbb{D}}
\newcommand\Eb{\mathbb{E}}
\DeclareMathOperator{\Cov}{Cov}
\newcommand\weak{\rightsquigarrow}
\newcommand{\ip}[1]{\lfloor #1 \rfloor}
\begin{document}

\title{A note on weak convergence of the sequential multivariate  empirical process under strong mixing}

\author{Axel B\"ucher\footnote{Universit\'{e} catholique de Louvain,
Institut de statistique, Voie du Roman Pays 20, 1348 Louvain-la-Neuve, Belgium. E-mail: axel.buecher@rub.de.} \smallskip\\ 
\textit{Universit\'{e} catholique de Louvain \&  Ruhr-Universit\"at Bochum} \\
}

\maketitle

\begin{abstract}
This article investigates weak convergence of the sequential $d$-dimensional empirical process under strong mixing. Weak convergence is established for mixing rates $\alpha_n = O(n^{-a})$, where $a>1$, which slightly improves upon existing results in the literature that are based on mixing rates depending on the dimension $d$.
\end{abstract}


\noindent \textit{Keywords and Phrases:}  Multivariate sequential empirical processes; weak convergence; strong alpha mixing; Ottaviani's inequality.

\smallskip

\noindent \textit{AMS Subject Classification:} 60F17, 60G10, 62G30.

\section{Introduction}
\def\theequation{1.\arabic{equation}}
\setcounter{equation}{0}

Let $(\vect U_i)_{i\in\Z}$, $\vect U_i = (U_{i1}, \dots, U_{id})$, be a strictly stationary sequence of $d$-dimensional random vectors whose marginals are standard uniform. Denote the joint cumulative distribution function of $\vect U_i$ by $C$. The corresponding empirical process is defined, for any $\vect u = (u_1, \dots, u_d) \in [0,1]^d$, by
\[
	\Db_n(\vect u) = \frac{1}{\sqrt n} \sum_{i=1}^n \{ \ind( \vect U_i \le \vect u) - C(\vect u) \}.
\]
Under various types of weak dependence conditions, the process $\Db_n$ is known to converge weakly in the space of bounded functions on $[0,1]^d$ equipped with the supremum-norm, denoted by $(\ell^\infty([0,1]^d), \| \cdot \|_\infty)$, to a tight, centered Gaussian process~$\Db_C$ with covariance
\[
	 \Cov\{  \Db_C(\vect u) , \Db_C(\vect v) \} = \sum_{i\in\Z} \Cov\{ \ind(\vect U_0\le \vect u) , \ind(\vect U_i\le \vect v) \},
\]
see for instance \cite{ArcYu94} and \cite{DouMasRio95} for $\beta$-mixing, \cite{Rio00} for $\alpha$-mixing, \cite{DouFerLan09} for $\eta$-dependence or \cite{DurTus12} for multiple mixing condtions, among others.  Here and throughout, weak convergence is understood in the sense of Definition~1.3.3 of \cite{VanWel96}.

In this note, we are interested in situations in which the sequence $(\vect U_i)_{i\in\Z}$ satisfies strong ($\alpha$-)mixing conditions. Let $(X_i)_{i\in\Z}$ be a sequence of Banach-space valued random variables. For $a\le b$, where $a,b\in\Z\cup\{-\infty,\infty\}$, let $\Fc_a^b$ denote the $\sigma$-field generated by $(X_i)_{a\le i \le b}$. The strong mixing coefficients of the sequence $(X_i)_{i \in \Z}$ are then defined by $\alpha_0 = 1/2$ and 
\[
\alpha_n = \sup_{p \in \Z} \sup_{A \in \Fc_{-\infty}^p,B\in \Fc_{p+n}^{+\infty}} | \Prob(A \cap B) - \Prob(A) \Prob(B) |, \qquad n \ge 1.
\]
The sequence $(X_i)_{i \in \Z}$ is said to be {\em strongly mixing} if $\alpha_n \to 0$ as $n \to \infty$. 

In the following, let $\alpha_n$ denote the mixing coefficients of the sequence $(\vect U_i)_{i\in\Z}$. It has been shown by \cite{Rio00} that $\Db_n \weak \Db$ in $(\ell^\infty([0,1]^d), \| \cdot \|_\infty)$ provided that $\alpha_n = O(n^{-a})$ for some $a>1$, thereby improving previous results by \cite{Yos75} and \cite{ShaYu96}.

In this note, we are interested in the slightly more general sequential empirical process 
\[
	\Bb_{n}(s,\vect u) = \frac{1}{\sqrt n}  \sum_{i=1}^{\ip{s n}}  \{ \ind(\vect U_i \le \vect u) - C(\vect u) \},
\]
where $(s,\vect u) = (s,u_1, \dots, u_d) \in [0,1]^{d+1}$ and $\ip{ns}$ denotes the integer part of $ns$. Note that $\Db_{n}(\vect u) =\Bb_{n}(1,\vect u)$. Investigating the process $\Bb_n$ is interesting for several reasons in mathematical statistics. For instance, the process can be used to derive nonparametric tests for change point detection in a $d$-dimensional time series, see, e.g., \cite{Ino01}. As a second application, suppose one is interested in constructing confidence bands for some real-valued estimator that can be written as a functional of the empirical cumulative distribution function, as for instance its integral over $\vect u\in[0,1]^d$. Then, following the self-normalizing approach developed in \cite{Sha10}, a weak convergence result for $\Bb_n$ can used to obtain confidence bands for this estimator that do not require a tuning-parameter-dependent estimation of the asymptotic covariance.

Regarding weak convergence results for $\Bb_n$, it is again known for various types of weak dependence conditions that 
\begin{align} \label{eq:weak}
	\Bb_n \weak \Bb_C \quad \text{ in } (\ell^\infty([0,1]^{d+1}, \| \cdot \|_\infty),
\end{align}
where $\Bb_C$ denotes a tight, centered Gaussian process with covariance
\begin{align*}
	 \Cov\{  \Bb_C(s,\vect u) , \Bb_C(t,\vect v) \} 
	= (s \wedge t) \sum_{i\in\Z} \Cov\{ \ind(\vect U_0\le \vect u) , \ind(\vect U_i\le \vect v) \},
\end{align*}
see for instance \cite{DedMerRio13} for $\beta$-mixing or \cite{DehDurTus13} for multiple mixing properties, among others. To the best of our knowledge, the best rate available in the literature for strongly mixing sequences follows from  the strong approximation result established in \cite{Dho84}: if $\alpha_n=O(n^{-b})$ with $b>2+d$, then \eqref{eq:weak} holds.
It is the purpose of the present note to improve this rate to $\alpha_n=O(n^{-a})$ for any $a>1$, independently of the dimension $d$, which is the same rate as established in \cite{Rio00} for the non-sequential process $\Db_n$. The proof of this result is inspired by the proof of Theorem 2.12.1 in \cite{VanWel96} and is based
on an adapted version of Ottaviani's inequality, see Proposition A.1.1 in the last-named reference, to strongly mixing sequences. This inequality might be of independent interest in other applications where one wants to transfer a weak convergence result from the non-sequential to the sequential setting.

\section{Main result}
\def\theequation{2.\arabic{equation}}
\setcounter{equation}{0}

\begin{theorem} \label{theo:weak}
If $\alpha_n=O(n^{-(1+\eta)})$ for some $\eta>0$, then, as $n\to\infty$,
\begin{align*} 
	\Bb_n \weak \Bb_C \quad \text{ in } (\ell^\infty([0,1]^{d+1}, \| \cdot \|_\infty).
\end{align*}
\end{theorem}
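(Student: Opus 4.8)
The plan is to establish the two ingredients that, by Theorem~1.5.4 in \cite{VanWel96}, are equivalent to the asserted weak convergence: convergence of the finite-dimensional distributions of $\Bb_n$ and asymptotic tightness of $\Bb_n$ in $\ell^\infty([0,1]^{d+1})$. The latter I would recast, via Theorem~1.5.7 in the same reference, as asymptotic $\rho$-equicontinuity with respect to a semimetric $\rho$ under which $\Bb_C$ has $\rho$-uniformly continuous sample paths; a convenient choice is one dominated by $|s-t|^{1/2} + d_C(\vect u, \vect v)$, where $d_C$ denotes the intrinsic semimetric of the limit $\Db_C$ of the ordinary process $\Db_n = \Bb_n(1,\vect u)$.

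Finite-dimensional convergence is routine. Fixing points $(s_j, \vect u_j)$, $j = 1, \dots, k$, and applying the Cram\'er--Wold device, it suffices to prove a central limit theorem for a linear combination of the bounded, centered variables $\ind(\vect U_i \le \vect u_j) - C(\vect u_j)$ truncated at the nested index sets $\{i \le \ip{n s_j}\}$. Since the summands are bounded and $\alpha_n = O(n^{-(1+\eta)})$ implies $\sum_n \alpha_n < \infty$, the central limit theorem for strongly mixing sequences applies in triangular-array form; the nested truncation together with stationarity and summability of the lag covariances yields precisely the factor $s \wedge t$ in the limiting covariance.

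The heart of the argument is asymptotic equicontinuity, and this is where the adapted version of Ottaviani's inequality (the analogue of Proposition~A.1.1 in \cite{VanWel96} for strongly mixing sequences) is used to transfer the known equicontinuity of $\Db_n$ at time $s = 1$ to all times simultaneously. I would split the joint modulus of continuity into a spatial and a temporal part. For the spatial part, fix $\delta > 0$ and regard the increments $\vect u \mapsto \ind(\vect U_i \le \vect u) - C(\vect u)$ as summands taking values in $\ell^\infty(\Hc_\delta)$, where $\Hc_\delta = \{\ind(\cdot \le \vect u) - \ind(\cdot \le \vect v) : d_C(\vect u, \vect v) < \delta\}$; writing $S_k$ for the corresponding partial sums, one has $\sup_{s} \sup_{\Hc_\delta} |\Bb_n(s, \vect u) - \Bb_n(s, \vect v)| = n^{-1/2} \max_{k \le n} \|S_k\|_{\Hc_\delta}$, and the adapted Ottaviani inequality bounds this time-maximum in terms of the single endpoint $\|S_n\|_{\Hc_\delta} = \sqrt n\, \sup_{\Hc_\delta} |\Db_n(\vect u) - \Db_n(\vect v)|$. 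The endpoint is then made small by the asymptotic equicontinuity of $\Db_n$, which holds by \cite{Rio00} under exactly the rate $\alpha_n = O(n^{-(1+\eta)})$. For the temporal part I would partition $[0,1]$ into $O(1/\delta)$ blocks of length $\delta$, apply the adapted Ottaviani inequality on each block (now with the supremum norm over the full class $\{\ind(\cdot \le \vect u)\}$) to dominate the within-block time-oscillation of $\Bb_n$ by its increment over the block endpoints, and observe that by stationarity such an endpoint increment is a rescaled empirical process built from about $\delta n$ observations, hence of order $\sqrt\delta$ in probability and negligible as $\delta \downarrow 0$.

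The main obstacle is the adapted Ottaviani inequality and the dependence bookkeeping surrounding it. In the independent case the inequality rests on the independence of the tail block $S_n - S_k$ from the initial block $S_k$; under strong mixing this fails, and I would restore approximate independence by inserting a short gap of omitted indices between the two blocks, estimating the decoupling error by the mixing coefficient of the gap length and then choosing that length so that the error is summable while the omitted indices contribute negligibly to the partial sums. It is exactly here that $\sum_n \alpha_n < \infty$, guaranteed by $\eta > 0$, enters and explains why no dimension-dependent rate is required. Two further technical points must be dispatched: first, the denominators $\min_{k \le n} \Prob(\|S_n - S_k\|_{\Hc_\delta} \le \lambda)$ in Ottaviani's bound must be shown to stay bounded away from zero uniformly in $n$, which follows from the asymptotic equicontinuity of the tail empirical processes (again \cite{Rio00}) via stationarity and Markov's inequality; and second, the union over the $O(1/\delta)$ time blocks in the temporal part must be absorbed, for which a moment inequality for sums of strongly mixing arrays (as in \cite{Rio00}) provides the required uniform tail control on the block increments.
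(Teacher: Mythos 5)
Your proposal follows essentially the same route as the paper: the same split of the modulus of continuity into a spatial and a temporal part, the same reduction of the spatial part to $\max_{k\le n}\sqrt{k/n}\,w_\delta(\Db_k)$ handled by a gap-block Ottaviani inequality whose decoupling cost is $\ip{n/\ell}\alpha_\ell$, the same use of \cite{Rio00} for the endpoint process $\Db_n$, and the same stationarity argument reducing the $O(1/\delta)$ time blocks to a single one. The one place you genuinely diverge is in absorbing the factor $\lceil 1/\delta\rceil$ in the temporal part: you propose to control $\Prob\bigl(\sqrt{\ip{n\delta}/n}\,\sup_{\vect u}|\Db_{\ip{n\delta}}(\vect u)|>\eps\bigr)$ by a finite-sample moment inequality for strongly mixing sums, but Rio's moment inequalities are for real-valued partial sums at a fixed $\vect u$, and upgrading them to a uniform $L^p$ bound ($p>2$) on the \emph{supremum} of the empirical process would require an additional chaining argument that you do not supply. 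The paper sidesteps this entirely: by the Portmanteau theorem the tail probability is asymptotically bounded by $\Prob(\sup_{\vect u}|\Db_C(\vect u)|>\eps\delta^{-1/2})$, and Proposition~A.2.3 in \cite{VanWel96} gives moments of all orders for the supremum of the \emph{Gaussian limit}, so the bound decays faster than any power of $\delta$ at no extra cost. The same remark applies to your treatment of the Ottaviani denominators, where ``stationarity and Markov'' should be replaced by the Portmanteau bound $\limsup_n\Prob(\sup_{\vect u}|\Db_n(\vect u)|>\eps)\le\Prob(\sup_{\vect u}|\Db_C(\vect u)|>\eps)<1$ together with a trivial deterministic bound for small block lengths. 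Everything else in your sketch, including the choice of gap length $\ell_n$ with $n^{1/(2+\eta)}\ll\ell_n\ll n^{1/2}$ and the observation that the omitted short blocks are deterministically $O(\ell_n/\sqrt n)$, matches the paper's argument.
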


For the proof of this Theorem, we need to establish weak convergence of the finite-dimensional distributions (fidis) and asymptotic tightness. Regarding weak convergence of the fidis, we can for instance apply Theorem 2.1 in \cite{Pel96}. The details are omitted for the sake of brevity.

Let us consider the tightness part. For some function $f\in\ell^\infty([0,1]^p)$, $p\ge 1$, and $\delta>0$ let
\begin{align*}
w_\delta(f) = \sup_{\| \vect x - \vect y \|\le \delta } |f(\vect x) - f (\vect y) |
\end{align*}
denote the modulus of continuity of $f$.  By the results in \cite{VanWel96}, Section 1.5, the following Lemma completes the proof of Theorem~\ref{theo:weak}.
\begin{lemma}[Asymptotic tightness of $\Bb_n$]
Let $\alpha_n=O(n^{-(1+\eta)})$ for some $\eta\in(0,1)$. Then
\[
	\lim_{\delta \searrow 0} \limsup_{n\to\infty} \Prob(w_\delta(\Bb_{n}) > \eps) = 0.
\]
\end{lemma}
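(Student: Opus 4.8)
The plan is to transfer the asymptotic tightness of the non-sequential process $\Db_n$, which holds under the present assumption by \cite{Rio00}, to the sequential process $\Bb_n$, the extra fluctuations in the time coordinate $s$ being handled by the mixing analogue of Ottaviani's inequality announced in the introduction. Write $S_k=\frac{1}{\sqrt n}\sum_{i=1}^{k}\{\ind(\vect U_i\le\cdot)-C(\cdot)\}\in\linf$ for $k=0,1,\dots,n$, so that $\Bb_n(s,\cdot)=S_{\ip{sn}}$ and $\Db_n=S_n$. If $\|(s,\vect u)-(t,\vect v)\|\le\delta$ then $|s-t|\le\delta$ and $\|\vect u-\vect v\|\le\delta$; since $s\mapsto\Bb_n(s,\cdot)$ is piecewise constant this gives
\[
	w_\delta(\Bb_n)\le\underbrace{\max_{0\le k\le n}\ \sup_{\|\vect u-\vect v\|\le\delta}|S_k(\vect u)-S_k(\vect v)|}_{=:A_{n,\delta}}+\underbrace{\sup_{|s-t|\le\delta}\ \sup_{\vect v\in[0,1]^d}|\Bb_n(s,\vect v)-\Bb_n(t,\vect v)|}_{=:B_{n,\delta}},
\]
so it suffices to establish the stated double limit for $A_{n,\delta}$ and $B_{n,\delta}$ separately.

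The term $A_{n,\delta}$ equals $\max_{0\le k\le n}\|S_k\|_{(\delta)}$ for the oscillation seminorm $\|h\|_{(\delta)}=\sup_{\|\vect u-\vect v\|\le\delta}|h(\vect u)-h(\vect v)|$ on $\linf$, and in particular $\|S_n\|_{(\delta)}=w_\delta(\Db_n)$. Applying the mixing version of Ottaviani's inequality to the partial sums $(S_k)_{k}$ measured in $\|\cdot\|_{(\delta)}$ bounds $\Prob(A_{n,\delta}>\eps)$, up to a mixing remainder and a denominator correction, by a fixed multiple of $\Prob(w_\delta(\Db_n)>\eps')$ for a suitable $\eps'>0$. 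Since $\Db_n\weak\Db_C$ by \cite{Rio00}, with tight limit, one has $\lim_{\delta\searrow0}\limsup_{n\to\infty}\Prob(w_\delta(\Db_n)>\eps')=0$, which disposes of $A_{n,\delta}$. The Ottaviani denominator stays bounded away from zero because the tail increments $S_n-S_k$ have small oscillation seminorm, again by the asymptotic equicontinuity of $\Db_n$, while the mixing remainder is negligible by virtue of $\sum_{i\ge1}\alpha_i<\infty$, guaranteed by $\alpha_n=O(n^{-(1+\eta)})$.

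For $B_{n,\delta}$ I would fix a grid $0=s_0<\dots<s_N=1$ with $s_j-s_{j-1}\asymp\delta$ and $N\asymp\delta^{-1}$, so that any $s,t$ with $|s-t|\le\delta$ meet at most two adjacent cells and, with $k_j=\ip{s_jn}$,
\[
	B_{n,\delta}\le 2\max_{1\le j\le N}\ \max_{k_{j-1}\le k\le k_j}\|S_k-S_{k_{j-1}}\|_\infty.
\]
On each cell the inner maximum is a maximum of sup-norm-valued partial sums, so the mixing Ottaviani inequality again bounds it by the cell endpoint $\|S_{k_j}-S_{k_{j-1}}\|_\infty$. The latter is a block empirical process over $\asymp\delta n$ summands, hence of sup-norm order $\sqrt\delta$ with a tail sharp enough to survive the union bound over the $N\asymp\delta^{-1}$ cells; letting $n\to\infty$ and then $\delta\searrow0$ gives the claim, the summability of $(\alpha_i)$ once more supplying the necessary variance and tail estimates.

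The hard part will be the mixing Ottaviani inequality on which both steps rest. In the independent case the classical inequality exploits the independence of an initial block $S_k$ and the tail increment $S_n-S_k$, which fails under strong mixing; to recover it one must insert a gap of some length $\ell$ between the two blocks, control the resulting coupling error by $\alpha_\ell$ (or $\sum_{i\ge\ell}\alpha_i$), and absorb the short bridging block into the error. Balancing the gap length against the mixing decay while keeping the Ottaviani denominator bounded away from zero through a variance bound for the tail increment is the delicate point, and it is precisely here that the assumption $\alpha_n=O(n^{-(1+\eta)})$, $\eta\in(0,1)$, enters: it renders $\sum_{i}\alpha_i$ finite and controls the long-run variances $\sum_{i\in\Z}\Cov\{\ind(\vect U_0\le\vect u),\ind(\vect U_i\le\vect v)\}$, so that the whole transfer scheme closes.
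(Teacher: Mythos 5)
Your overall strategy coincides with the paper's: the same splitting of $w_\delta(\Bb_n)$ into a space-increment term and a time-increment term, and two applications of a gap-plus-$\alpha_\ell$ version of Ottaviani's inequality, which is exactly the paper's Lemma~\ref{lem:ott}, including the bridging-block error term, the $\ip{n/\ell}\alpha_\ell$ remainder, and the denominator issue you single out. Your treatment of $A_{n,\delta}$ (reduction to $w_\delta(\Db_k)$, appeal to \cite{Rio00} for the asymptotic equicontinuity of $\Db_n$, deterministic $O(\ell_n/\sqrt n)$ bound for the short blocks) matches the paper's argument for the second summand.

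The gap is in the final step for $B_{n,\delta}$. After the union bound over the $N\asymp\delta^{-1}$ cells and the Ottaviani reduction to the cell endpoint, you must show that
\[
	\delta^{-1}\,\Prob\Bigl(\sup_{\vect u\in[0,1]^d}\bigl|S_{k_j}(\vect u)-S_{k_{j-1}}(\vect u)\bigr|>\eps\Bigr)
\]
vanishes as $n\to\infty$ followed by $\delta\searrow0$, and you attribute this to ``variance and tail estimates'' supplied by $\sum_i\alpha_i<\infty$. A variance bound is not enough: Chebyshev applied to a block of $\asymp\delta n$ summands gives a tail of order $\delta/\eps^2$, which the factor $\delta^{-1}$ exactly cancels, leaving a nonvanishing constant. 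You need the endpoint tail to be $o(\delta)$. The paper obtains this not from a moment inequality for the mixing sums but by passing to the limit: by stationarity and the Portmanteau theorem, the $\limsup_n$ of the endpoint probability is at most $\Prob(\sup_{\vect u}|\Db_C(\vect u)|\ge\eps\delta^{-1/2})$, and since the supremum of the tight Gaussian limit possesses moments of every order (Proposition A.2.3 in \cite{VanWel96}), this probability is $o(\delta^p)$ for every $p$, which beats the $\delta^{-1}$ from the union bound. Without this step, or some substitute yielding a better-than-linear-in-$\delta$ tail for the block supremum, your scheme does not close; the rest of the outline, including the balancing of the gap length $\ell_n$ against the mixing rate, is consistent with the paper.
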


\begin{proof}
First, note that, by the results in Section 7 in \cite{Rio00} and Theorem 1.5.7 and its addendum in \cite{VanWel96}, we have
\begin{align} \label{eq:tid}
	\lim_{\delta \searrow 0} \limsup_{n\to\infty} \Prob(w_\delta(\Db_{n}) > \eps) =0.
\end{align}
By the triangle inequality
\begin{multline}\label{eq:decom1}
w_\delta(\Bb_{n}) \le \sup_{|s_1 - s_2| \le \delta} \sup_{\vect u\in[0,1]^d} | \Bb_{n}(s_1,\vect u) - \Bb_{n}(s_2,\vect u) | \\
	+ \sup_{0 \le s\le 1} \sup_{\| \vect u - \vect v \| \le \delta} | \Bb_{n}(s,\vect u) - \Bb_{n}(s,\vect v) |.
\end{multline}
The second summand is equal to
\begin{multline*}
	\max_{k=1}^n \sup_{\| \vect u - \vect v \| \le \delta}| \Bb_{n}(k/n,\vect u) - \Bb_{n}(k/n,\vect v) | \\
		= \max_{k=1}^n \sup_{\| \vect u - \vect v \| \le \delta}\left| \sqrt{\frac{k}{n}} \{ \Db_{k}(\vect u) - \Db_{k}(\vect v)\}\right|  = \max_{k=1}^n \sqrt{\frac{k}{n}}\, w_\delta(\Db_{k}).
\end{multline*}
Define $G_i(\vect u,\vect v)=\ind(\vect U_i\le \vect u) - C(\vect u) - \ind(\vect U_i \le \vect v) + C(\vect v)$. Set $\kappa=\eta/8$ and  $\ell_n=\ip{n^{1/2-\kappa}}$. For instance by observing that both $\Db_n :\Omega \to D([0,1]^d)$ and $w_\delta: D([0,1]^d) \to \R$ are ball-measurable, we can apply Ottaviani's inequality under strong mixing, see Lemma \ref{lem:ott}, with  $T = \{ t=(\vect u,\vect v) \in [0,1]^{2d} : \| \vect u - \vect v \| \le \delta\}$ and $Y_i(t) = n^{-1/2} G_i(t)$, where $t=(\vect u,\vect v)$. Let $\eps>0$. Then we obtain
\begin{align} \label{eq:decom2}
\Prob( \max_{k=1}^n \sqrt{k/n}\, w_\delta(\Db_{k}) > 3 \eps) 
	\le \frac{A_{n1} + A_{n2} + \ip{n/\ell_n} \alpha_{\ell_n}}{1- \max_{k=1}^n \Prob(\sqrt{k/n} \, w_\delta(\Db_{k}) > \eps) },
\end{align}
where $A_{n1}=\Prob(w_\delta(\Db_{n})> \eps)$ and 
\begin{align*} 
	A_{n2} &= \Prob\bigg(\max_{ \genfrac{}{}{0pt}{}{ j < k\in\{1, \dots, n\} }{ k-j \le 2\ell_n } } \sup_{\|\vect u - \vect v\| \le \delta} \frac{1}{\sqrt n} \bigg| \sum_{i=j+1}^{k} G_i(\vect u, \vect v)\bigg|> \eps\bigg). 
\end{align*}
For sufficiently large $n$, we have $\ell_n \ge \tfrac{1}{2} n^{1/2 - \kappa} $, whence  
\begin{align*}
	\ip{n/\ell_n} \alpha_{\ell_n} \le  \tfrac{1}{2} n^{1-(1/2-\kappa) - (1/2-\kappa)(1+\eta)} = \tfrac{1}{2}  n^{2\kappa - \eta/2 + \kappa \eta}= \tfrac{1}{2} n^{\eta/4(\eta/2-1)} = o(1)
\end{align*}
as $n\to\infty$.
Next, $A_{n1}$ converges to $0$ as $n\to \infty$ followed by $\delta\searrow 0$ by \eqref{eq:tid}. Moreover,
\[
	\max_{ \genfrac{}{}{0pt}{}{ j < k\in\{1, \dots, n\} }{ k-j \le 2\ell_n }} \sup_{\|\vect u - \vect v\| \le \delta} \frac{1}{\sqrt n} \bigg| \sum_{i=j+1}^{k} G_i(\vect u, \vect v)\bigg| \le 8 \ell_n/\sqrt{n} \le 8 n^{-\kappa} =o(1),
\]
as $n\to\infty$, whence $A_{n2} = o(1)$.

To complete the treatment of the second summand in \eqref{eq:decom1}, it remains to be shown that the denominator in \eqref{eq:decom2} is bounded away from zero for sufficiently large $n$ and small $\delta$. By \eqref{eq:tid}, there exists $\delta_0 > 0$ such that $\limsup_{n\to\infty} \Prob( w_\delta(\Db_n) > \eps) < 1/2$ for all $\delta \le \delta_0$. Then, there exists $n_0=n_0(\delta_0)$ such that $\Prob( w_{\delta_0}(\Db_n) > \eps)< 1/2$ for all $n \ge n_0$.
Therefore, for all $\delta<\delta_0$,
\[
	 \max_{k=n_0}^n \Prob(\sqrt{k/n}\, w_{\delta}(\Db_{k})>\eps) \le \max_{k=n_0}^n \Prob( w_\delta(\Db_{k})>\eps) \le \max_{k=n_0}^n \Prob( w_{\delta_0}(\Db_{k})>\eps)    < 1/2.
\]
On the other hand, for $k<n_0$ and arbitrary $\delta>0$, we have 
\[
	w_\delta(\Db_k) \le 2 \sup_{\vect u \in [0,1]^d} | \Db_k(\vect u)| \le 4 \sqrt k \le 4 \sqrt n_0,
\]
which implies that  $\max_{k=1}^{n_0-1} \Prob(\sqrt{k/n}\, w_{\delta}(\Db_{k})>\eps)=0$ for sufficiently large $n$ and all $\delta>0$. Therefore, the denominator in \eqref{eq:decom2} is bounded from below by $1/2$.

Finally, consider the first suprema on the right of \eqref{eq:decom1}. It suffices to show that, for every $\eps>0$,
\[
	\Prob\left( \max_{\genfrac{}{}{0pt}{}{ 0 \le j\delta \le 1 }{ j\in  \N }  } \sup_{j\delta \le s \le (j+1)\delta} \sup_{\vect u \in [0,1]^d} | \Bb_{n} (s,\vect u) - \Bb_{n}(j\delta, \vect u) | > 3\eps \right)
\]
converges to $0$ as $n\to \infty$ followed by $\delta\searrow 0$. By stationarity of the increments of $\Bb_{n}$ in $s$, the at most $\lceil 1/\delta \rceil$ terms in the maximum are identically distributed. Therefore, the probability can be bounded by
\begin{align} \label{eq:decom3}
	&\ \lceil 1/\delta \rceil \, \Prob\left( \sup_{0 \le s \le \delta } \sup_{\vect u \in [0,1]^d } | \Bb_{n}(s, \vect u) | > 3\eps \right) \nonumber \\
	= & \ \lceil 1/\delta \rceil \, \Prob\left( \max_{k =1}^{ \ip{n \delta} } \sup_{\vect u \in [0,1]^d } | \sqrt{k/n}\, \Db_{k}(\vect u) | > 3\eps \right) \nonumber \\
	\le & \  \frac{\lceil 1/\delta \rceil  (B_{n1}+B_{n2} + \ip{ \ip{n\delta} /\ell_n} \alpha_{\ell_n} ) }{1-  \max_{k =1}^{ \ip{n \delta} } \Prob\left( \sqrt{k/n}\, \sup_{\vect u \in [0,1]^d} | \Db_{k}(\vect u) | > \eps \right) },
\end{align}
by the Ottaviani-type inequality in Lemma~\ref{lem:ott}, where
\begin{align*}
	B_{n1} &= \Prob\left( \sqrt{ \ip{n\delta}/n} \, \sup_{\vect u \in [0,1]^d} |\Db_{ \ip{n\delta}} (\vect u) | > \eps \right) \\
	B_{n2} &= \Prob\left( \max_{ \genfrac{}{}{0pt}{}{j < k \in \{1, \dots,  \ip{n \delta} \} }{ k-j \le 2\ell_n} } \sup_{\vect u \in [0,1]^d} \left|n^{-1/2} \sum_{i=j+1}^k  \{ \ind(\vect U_i \le \vect u) - C(\vect u) \} \right| > \eps \right).
\end{align*}
Here, we were allowed to apply Lemma \ref{lem:ott} by a similar argument as before.
It remains to be shown that $B_{n1}$, $B_{n2}$ and $\ip{ \ip{n\delta} /\ell_n} \alpha_{\ell_n}$, multiplied with $1/\delta$, converges to zero as $n\to \infty$ followed by $\delta \searrow0$ and that the denominator in \eqref{eq:decom3} is bounded away from zero. First of all, for $\delta \le 1$, we have
\[
	\lceil 1/\delta \rceil \ip{ \ip{n\delta} /\ell_n} \alpha_{\ell_n} \le \left(\frac{1}{\delta} + 1\right) \frac{n \delta \alpha_{\ell_n}}{\ell_n} \le 2\frac{ n\alpha_{\ell_n}}{\ell_n} = o(1)
\]
as $n\to\infty$, by the same arguments as above. 
Second, by the Portmanteau-Theorem, 
\begin{align*}
 	\limsup_{n\to\infty} B_{n1} 
	&\le \limsup_{n\to\infty} \Prob( \sup_{\vect u \in [0,1]^d} |\Db_n(\vect u)| > \eps\delta^{-1/2} ) \\
	&\le \limsup_{n\to\infty} \Prob( \sup_{\vect u \in [0,1]^d } |\Db_n(\vect u)| \ge \eps\delta^{-1/2} ) \\
	&\le \Prob( \sup_{\vect u \in [0,1]^d } |\Db_C(\vect u)| \ge \eps\delta^{-1/2} ) 
	= 	\Prob( \sup_{\vect u \in [0,1]^d} | \Db_C(\vect u) | > \eps \delta^{-1/2} )
\end{align*}
since $\sup_{\vect u \in [0,1]^d} | \Db_C(\vect u) | $ is a continuous random variable.
Since additionally $\sup_{\vect u \in [0,1]^d} | \Db_{C}(\vect u) |$ possesses moments of any order (cf.~Proposition A.2.3 in \cite{VanWel96}), the latter probability converges to zero faster than any power of $\delta$, as $\delta \searrow 0$. 

Third, regarding $B_{n2}$, we have
\begin{align*}
	\max_{ \genfrac{}{}{0pt}{}{j < k \in \{1, \dots,  \ip{n \delta} \} }{ k-j \le 2\ell_n}  } \sup_{\vect u \in [0,1]^d} \left|n^{-1/2} \sum_{i=j+1}^k  \{ \ind(\vect U_i \le \vect u) - C(\vect u) \} \right|   \le  4 \ell_n n^{-1/2}= o(1),
\end{align*}
as $n\to \infty$, whence $B_{n2}$ converges to zero as $n\to \infty$ followed by $\delta \searrow0$ as asserted.

Finally, let us consider the denominator in \eqref{eq:decom3}. By a similar argument as before, we have from the Portmanteau Theorem that
\[
	\limsup_{n\to\infty} \Prob( \sup_{\vect u \in [0,1]^d} |\Db_n(\vect u)| > \eps ) \le  \Prob( \sup_{\vect u \in [0,1]^d} |\Db_C(\vect u)| > \eps )
\]
by continuity of $\sup_{\vect u \in [0,1]^d} |\Db_C(\vect u)|$. Also, since $\eps>0$, we obtain  that $p=\Prob( \sup_{\vect u \in [0,1]^d} |\Db_C(\vect u)| > \eps )<1$, whence we can choose $\zeta>0$ such that
$0<\zeta< 1-p$. It follows that there exists $n_0\in \N$ such that
\[
\sup_{k \ge n_0} \Prob( \sup_{\vect u \in [0,1]^d } |\Db_k(\vect u)| > \eps ) \le p + \zeta.
\]
Hence,
\[
	 \max_{k =n_0}^{ \ip{n \delta} } \Prob( \sqrt{k/n} \sup_{\vect u \in [0,1]^d} |\Db_k(\vect u)| > \eps )
\leq \max_{k =n_0}^{ \ip{n \delta} } \Prob( \sup_{\vect u \in [0,1]^d} |\Db_k(\vect u)| > \eps ) \le p+\zeta.
\]
On the other hand, for $k < n_0$, we have  $\sqrt{k/n} \sup_{\vect u \in [0,1]^d} |\Db_k(\vect u)| \le \sqrt{k/n} \times 2\sqrt k \leq 2 n_0 / \sqrt{n}$, which implies that, for n large enough,
\[
 \max_{k =1}^{ n_0-1 } \Prob( \sqrt{k/n} \sup_{\vect u \in [0,1]^d} |\Db_k(\vect u)| > \eps ) = 0.
\]
Hence, the denominator in  \eqref{eq:decom3} is bounded from below by $1- p - \zeta>0$ for n large enough. This completes the proof.
\end{proof}

\section{An auxiliary Lemma} \label{sec:aux}
\def\theequation{3.\arabic{equation}}
\setcounter{equation}{0}


Let $(X_i)_{i\in\Z}$ be a sequence of random elements in some Banach space~$\Eb$. Let $T$ be some arbitrary index set and, for $i\in\Z$, let $G_i\in\ell^\infty(\Eb \times T)$. For $t\in T$, set $Y_i(t) = G_i(X_i,t)$ and $S_n(t)=\sum_{i=1}^n Y_i(t)$, for $n\ge 1$ and $S_0 \equiv 0$.
Finally, for $f\in\ell^\infty(T)$, let $\| f\| = \sup_{t \in T} | f(t) |$.
\begin{lemma} [An Ottaviani-type inequality under strong mixing] \label{lem:ott}
Suppose that $\| S_m - S_n \|$ is measurable for each $0 \le n < m$. 
Then, for each $\eps>0$ and $1 \le \ell <n$,
\begin{multline*}
	\Prob\left( \max_{k=1}^n \| S_k \| > 3\eps \right)  \times \left\{ 1- \max_{k=1}^n \Prob\left( \| S_n - S_k\| > \eps\right) \right\} \\
		\le \Prob( \| S_n \| > \eps) + \Prob\left(\max_{ \genfrac{}{}{0pt}{}{j < k \in \{1, \dots, n \} }{ k-j \le 2\ell}  }  \| S_{k} - S_j \| > \eps\right) + \ip{n/\ell} \times \alpha_\ell,
\end{multline*}
where $\alpha_n$ denotes the sequence of mixing coefficients of the sequence $(X_i)_{i\in\Z}$.
\end{lemma}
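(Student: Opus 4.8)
The plan is to adapt the classical proof of Ottaviani's inequality (Proposition A.1.1 in \cite{VanWel96}) via a first-passage decomposition, but to insert blocks of length $\ell$ so that the ``past'' event recording where the passage occurs and the ``future'' increment $S_n-S_k$ become separated by a gap of at least $\ell$ --- which is exactly what is needed to replace the independence step of the classical argument by the strong mixing coefficient $\alpha_\ell$. Concretely, I would set $\tau=\inf\{k\ge 1:\|S_k\|>3\eps\}$ (with $\tau=\infty$ if no such index exists), so that $\{\max_{k=1}^n\|S_k\|>3\eps\}=\{\tau\le n\}$, and partition this event according to the block $B_J=\{(J-1)\ell+1,\dots,J\ell\}$ in which $\tau$ falls. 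Writing $D_J=\{\tau\in B_J\}$, the events $(D_J)_J$ are pairwise disjoint, and each $D_J$ is $\Fc_{-\infty}^{J\ell}$-measurable (using the assumed measurability of $\|S_k\|=\|S_k-S_0\|$ and the fact that $Y_i$ depends only on $X_i$).

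The decoupling step is the heart of the matter: for each block I would compare $\|S_n\|$ not to the increment starting at $\tau$ but to the increment starting at the endpoint of the \emph{next} block, $m_J=\min\{(J+1)\ell,n\}$. Setting $G_J=\{\|S_n-S_{m_J}\|\le\eps\}$ and $F=\{\max_{j<k,\,k-j\le 2\ell}\|S_k-S_j\|\le\eps\}$, the key deterministic inclusion is $D_J\cap G_J\cap F\subseteq\{\|S_n\|>\eps\}$: on $D_J$ one has $\|S_\tau\|>3\eps$; since $\tau>(J-1)\ell$ and $m_J\le(J+1)\ell$ one has $m_J-\tau<2\ell$, so $F$ forces $\|S_{m_J}-S_\tau\|\le\eps$; finally $G_J$ gives $\|S_n-S_{m_J}\|\le\eps$, and the triangle inequality $\|S_n\|\ge\|S_\tau\|-\|S_{m_J}-S_\tau\|-\|S_n-S_{m_J}\|>3\eps-\eps-\eps=\eps$ yields the inclusion. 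The detour through the next block is precisely what creates the gap of length $\ell$ between $D_J\in\Fc_{-\infty}^{J\ell}$ and $G_J\in\Fc_{m_J+1}^{\infty}\subseteq\Fc_{J\ell+\ell}^{\infty}$ (when $m_J=(J+1)\ell$), while $F$ absorbs the resulting block fluctuation $S_{m_J}-S_\tau$, which spans up to $2\ell$ consecutive indices and thereby matches the second term of the asserted bound.

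It then remains to assemble the pieces. Using disjointness of the $D_J$ I would write $\Prob(\|S_n\|>\eps)\ge\sum_J\Prob(D_J\cap G_J\cap F)=\Prob(F\cap\bigcup_J(D_J\cap G_J))\ge\sum_J\Prob(D_J\cap G_J)-\Prob(F^c)$, so that the fluctuation error $\Prob(F^c)$ enters only once rather than once per block. For each block with $m_J=(J+1)\ell<n$ the gap of length $\ell$ and the definition of $\alpha_\ell$ give $\Prob(D_J\cap G_J)\ge\Prob(D_J)\Prob(G_J)-\alpha_\ell\ge\Prob(D_J)\{1-\max_{k}\Prob(\|S_n-S_k\|>\eps)\}-\alpha_\ell$, whereas for the final block(s) with $m_J=n$ one has $G_J=\Omega$ and no mixing term is incurred. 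Summing over $J$, using $\sum_J\Prob(D_J)=\Prob(\max_{k=1}^n\|S_k\|>3\eps)$ together with the fact that at most $\ip{n/\ell}$ blocks require decoupling, and rearranging, gives exactly the stated inequality.

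The routine parts are the triangle inequality and the measurability bookkeeping (guaranteed by the standing hypothesis that each $\|S_m-S_n\|$ is measurable). The main obstacle is combinatorial rather than analytic: one must arrange the blocking so that, simultaneously, (i) the past--future gap is at least $\ell$, which forces the detour to the next block and hence the $2\ell$-window appearing in $F$; (ii) the fluctuation error $\Prob(F^c)$ is extracted a single time, which is what the disjointness of the $D_J$ buys; and (iii) the accumulated mixing error is $\ip{n/\ell}\alpha_\ell$ and not $n\alpha_\ell$, which requires applying the mixing bound once per block and treating the boundary blocks, where the future increment is empty, separately.
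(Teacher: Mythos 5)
Your proposal is correct and follows essentially the same route as the paper: a first-passage decomposition grouped into blocks of length $\ell$, decoupling each block from the future increment $S_n-S_{(J+1)\ell}$ at the cost of one $\alpha_\ell$ per block via the definition of the mixing coefficient, absorbing the resulting within-window fluctuation $S_{m_J}-S_\tau$ (spanning at most $2\ell$ indices) into the second term, and treating the terminal block(s), where no future increment remains, without a mixing penalty. The only differences are notational (an explicit stopping time $\tau$ instead of the events $B_k$, and extracting $\Prob(F^c)$ once globally rather than per block, which is equivalent by disjointness of the $D_J$).
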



\begin{proof}
For $k=1, \dots, n$, define the event $B_k$ by
\[
	B_k = \left\{ \| S_k \| > 3 \eps, \| S_1 \| \le 3\eps, \dots, \|S_{k-1} \| \le 3\eps \right\}.
\]
Note that these events are pairwise disjoint and that their union is given by $\{\max_{k=1}^n \| S_k \| > 3\eps\}$. Furthermore, for $m=1, \dots, \ip{n/\ell}-1$, let 
\[
	C_m= \bigcup_{k=(m-1)\ell + 1}^{m\ell} B_k, \quad \text{ and }\ C_{\ip{n/\ell}} = \bigcup_{k=(\ip{n/\ell}-1)\ell+1}^n B_k,
\]
which are also pairwise disjoint and have the same union as the $B_k$'s.

Now, let us first consider a fixed $m \le \ip{n/\ell}-1$. Then
\begin{align} \label{eq:dd}
	&\ \Prob\left( C_m \right)  \times  \min_{k=1}^n \Prob\left( \| S_n - S_k\| \le \eps\right) \nonumber \\
		\le &\ \Prob\left(C_m\right) \Prob\left( \| S_n - S_{(m+1)\ell}\| \le \eps\right) \nonumber \\
		\le &\ \Prob\left( C_m, \| S_n - S_{(m+1)\ell}\| \le \eps\right) +  \alpha_\ell \nonumber \\
		\le &\ \Prob\left(C_m, \max_{k=(m-1)\ell+1}^{m\ell} \| S_{k} \| > 3 \eps, \| S_n - S_{(m+1)\ell}\| \le \eps\right) +  \alpha_\ell. 
\end{align}
Since $\|S_k \| \le \| S_k - S_{(m+1)\ell}\|  + \|S_{(m+1)\ell} - S_n \|+ \|S_n\|$ for any $k=1, \dots, n$, 
we have
\begin{align*}
	\| S_n \| & \ge \max_{k=(m-1)\ell+1}^{m\ell} \{ \| S_k \| - \|  S_{(m+1)\ell} - S_k \| - \|S_{(m+1)\ell} - S_n \| \} \\
	& \ge\bigl\{ \max_{k=(m-1)\ell+1}^{m\ell}  \| S_k \| \bigr\} - \bigl\{ \max_{k=(m-1)\ell+1}^{m\ell} \|  S_{(m+1)\ell} - S_k \| \bigr\} - \|S_{(m+1)\ell} - S_n \|. 
\end{align*}
Therefore, we can estimate the right-hand side of \eqref{eq:dd} by
\begin{align*}
		\le &\ \Prob \left(C_m, \|S_n\| > 2 \eps -  \max_{k=(m-1)\ell+1}^{m\ell} \| S_{(m+1)\ell} - S_{k} \| \right) +  \alpha_\ell \\
		\le &\ \Prob\left(C_m, \| S_n \| > \eps \right) + \Prob\left(C_m, \max_{k=(m-1)\ell+1}^{m\ell} \| S_{(m+1)\ell} - S_{k} \| > \eps\right) +  \alpha_\ell \\
		\le &\ \Prob\left(C_m, \| S_n \| > \eps \right) + \Prob\left(C_m, \max_{ \genfrac{}{}{0pt}{}{j < k \in \{1, \dots, n \} }{ k-j \le 2\ell}  }  \| S_{k} - S_j \| > \eps\right) +  \alpha_\ell.
\end{align*}

Next, let us consider the case $m = \ip{n/\ell}$. Then
\begin{align*}
	&\ \Prob\left( C_{\ip{n/\ell}} \right)  \times  \min_{k=1}^n \Prob\left( \| S_n - S_k\| \le \eps\right)  \\
		\le &\ \Prob\left(C_{\ip{n/\ell}}, \max_{k=(\ip{n/\ell}-1)\ell+1}^n \| S_{k} \| > 3 \eps \right) \\
		\le &\ \Prob \left(C_{\ip{n/\ell}}, \|S_n\| > 3 \eps - \max_{k=(\ip{n/\ell}-1)\ell+1}^n \| S_{n} - S_k\| \right)  \\
		\le &\ \Prob\left(C_{\ip{n/\ell}}, \| S_n \| > 2\eps \right) + \Prob\left(C_{\ip{n/\ell}},  \max_{k=(\ip{n/\ell}-1)\ell+1}^n \| S_{n} - S_k\| > \eps\right)  \\
		\le &\ \Prob\left(C_{\ip{n/\ell}}, \| S_n \| > \eps \right) + \Prob\left(C_{\ip{n/\ell}},  \max_{ \genfrac{}{}{0pt}{}{j < k \in \{1, \dots, n \} }{ k-j \le 2\ell}  }  \| S_{k} - S_j \| > \eps\right) .
\end{align*}

Joining both cases, we have, for any $m=1, \dots, \ip{n/\ell}$,
\begin{multline*}
	 \Prob\left( C_m \right)  \times  \min_{k=1}^n \Prob\left( \| S_n - S_k\| \le \eps\right)   \\
	 \le \Prob\left(C_m, \| S_n \| > \eps \right) +  \Prob\left(C_m, \max_{ \genfrac{}{}{0pt}{}{j < k \in \{1, \dots, n \} }{ k-j \le 2\ell}  }  \| S_{k} - S_j \| > \eps\right) +  \alpha_\ell.
\end{multline*}
Summation over $m$ finally yields the assertion.
\end{proof}

\textbf{Acknowledgements.} 
The author is thankful to \textit{Ivan Kojadinovic} for thorough proofreading and numerous suggestions concerning this manuscript. 


This work has been supported in parts by the Collaborative Research Center ``Statistical modeling of nonlinear dynamic processes'' (SFB 823) of the German Research Foundation (DFG) and by the IAP research network Grant P7/06 of the Belgian government (Belgian Science Policy), which is gratefully acknowledged.

\bibliographystyle{chicago}
\bibliography{biblio}
\end{document}